\newtheorem{theorem}{Theorem}[section]
\newtheorem{corollary}[theorem]{Corollary}
\newtheorem{proposition}[theorem]{Proposition}
\theoremstyle{definition}
\newtheorem{definition}[theorem]{Definition}
\theoremstyle{definition}
\newtheorem{remark}[theorem]{Remark}
\theoremstyle{definition}
\newtheorem{example}[theorem]{Example}
\numberwithin{equation}{section}
\title{On $(h,k,\mu,\nu)$-trichotomy of evolution operators in Banach spaces}
\date{}
\begin{document}


\maketitle

\author{ Mihail Megan, Traian Ceau\c su, Violeta Crai}

\begin{abstract}
The paper considers some concepts of trichotomy with different growth rates for evolution operators in Banach spaces. Connections between these concepts and characterizations in terms of Lyapunov- type norms are given.
\end{abstract}
\section{Introduction}
In the qualitative theory of evolution equations, exponential dichotomy, essentially introduced by O. Perron in \cite{peron} is one of the most important asymptotic properties and in last years it was treated from various perspective.

For some of the most relevant early contributions in this area we refer to the books of J.L. Massera and J.J. Schaffer \cite{masera}, Ju. L. Dalecki  and M.G. Krein \cite{daletchi} and W.A. Coppel \cite{copel}. We also refer to the book of C. Chichone and Yu. Latushkin \cite{chicone}.

In some situations, particularly in the nonautonomous setting, the concept of uniform exponential dichotomy is too restrictive and it is important to consider more general behaviors. Two different perspectives can be identify for to generalize the concept of uniform exponential dichotomy: on one hand one can define dichotomies that depend on the initial time (and therefore are nonuniform) and on the other hand one can consider growth rates that are not necessarily exponential.

The first approach leads to concepts of nonuniform exponential dichotomies and can be found in the works of L. Barreira and C. Valls \cite{ba2} and in a different form in the works of P. Preda and M. Megan \cite{preda-megan} and M. Megan, L. Sasu and B. Sasu \cite{megan-sasu}.

The second approach is present in the works of L. Barreira and C. Valls \cite{ba3}, A.J.G. Bento and C.M. Silva \cite{bento} and M. Megan \cite{megan}.

A more general dichotomy concept is introduced by M. Pinto in \cite{pinto2} called $(h,k)$-dichotomy, where $h$ and $k$ are growth rates. The concept of $(h,k)-$ dichotomy has a great generality and it permits the construction of similar notions for systems with dichotomic behaviour which are not described by the classical theory of J.L. Massera \cite{masera}.

As a natural generalization of exponential dichotomy (see \cite{ba3}, \cite{vio}, \cite{elaydi1}, \cite{saker}, \cite{sasu} and the references therein), exponential trichotomy is one of the most complex asymptotic properties of dynamical systems arising from the central manifold theory (see \cite{carr}). In the study of the trichotomy the main idea is to obtain a decomposition of the space at every moment into three closed subspaces: the stable subspace, the unstable subspace and the central manifold.

Two concepts of trichotomy have been introduced: the first by R.J. Sacker and G.L. Sell \cite{saker} (called (S,S)-trichotomy) and the second by S. Elaydi and O. Hayek \cite{elaydi1} (called (E,H)-trichotomy).

The existence of exponential trichotomies is a strong requirement and hence it is of considerable interest to look for more general types of trichotomic behaviors.

In previous studies of uniform and nonuniform trichotomies, the growth rates are always assumed to be the same type functions. However, the nonuniformly hyperbolic dynamical systems vary greatly in forms and none of the nonuniform trichotomy can well characterize all the nonuniformly dynamics. Thus it is necessary and reasonable to look for more general types of nonuniform trichotomies.

The present paper considers the general concept of nonuniform $(h,k,\mu,\nu)-$ trichotomy, which not only incorporates the existing notions of uniform or nonuniform trichotomy as special cases, but also allows the different growth rates in the stable subspace, unstable subspace and the central manifold.

We give characterizations of nonuniform $(h,k,\mu,\nu)-$ trichotomy using families of norms equivalent with the initial norm of the states space. Thus we obtain a characterization of the nonuniform $(h,k,\mu,\nu)-$trichotomy in terms of a certain type of uniform $(h,k,\mu,\nu)-$trichotomy.

As an original reference for considering families of norms in the nonuniform theory we mention Ya. B. Pesin's works \cite{pesin} and \cite{pesin1}. Our characterizations using families of norms are inspired by the work of L. Barreira and C. Valls \cite{ba3} where characterizations of nonuniform exponential trichotomy in terms of Lyapunov functions are given.
\section{Preliminaries}
Let $X$ be a Banach space and $\mathcal{B}(X)$ the Banach algebra of all linear and bounded operators on $X$. The norms on $X$ and on $\mathcal{B}(X)$ will be denoted by $\|\cdot\|$. The identity operator on $X$ is denoted by $I$. We also denote by $\Delta=\{(t,s)\in\mathbb{R}_+^2:t\geq s\geq 0\}$.

We recall that
an application $U:\Delta\to\mathcal{B}(X)$ is called \textit{evolution operator} on $X$ if
\begin{itemize}
	\item[$(e_1)$]$U(t,t)=I$, for every $t\geq 0$
	\item[] and
	\item[$(e_2)$]$U(t,t_0)=U(t,s)U(s,t_0)$, for all $(t,s),(s,t_0)\in\Delta$.
\end{itemize}
\begin{definition}
A map $P:\mathbb{R}_+\to\mathcal{B}(X)$ is called
	\begin{itemize}
		\item[(i)] \textit{a family of projectors} on $X$ if 
		$$P^2(t)=P(t),\text{ for every } t\geq 0;$$
		\item [(ii)] \textit{invariant} for the evolution operator $U:\Delta\to\mathcal{B}(X)$ if
		\begin{align*}
		U(t,s)P(s)x=P(t)U(t,s)x, 
		\end{align*}
		for all $(t,s,x)\in \Delta\times X$;
		\item[(iii)] \textit{stronlgy invariant} for the evolution operator $U:\Delta\to\mathcal{B}(X)$ if it is invariant for $U$ and for all $(t,s)\in\Delta$ the restriction of $U(t,s)$ on Range $P(s)$ is an isomorphism from Range $P(s)$ to Range $P(t)$.
	\end{itemize}
	\end{definition}
\begin{remark}
	It is obvious that if $P$ is strongly invariant for $U$ then it is also invariant for $U$. The converse is not valid (see \cite{mihit}).
\end{remark}
\begin{remark}\label{rem-proiectorstrong}
	If the family of projectors $P:\mathbb{R}_+\to\mathcal{B}(X)$ is strongly invariant for the evolution operator $U:\Delta\to\mathcal{B}(X)$ then (\cite{lupa}) there exists a map $V:\Delta\to\mathcal{B}(X)$ with the properties: 
	\begin{itemize}
		\item[$v_1)$ ] $V(t,s)$ is an isomorphism from Range $ P(t)$ to Range $ P(s)$, 
		\item [$v_2)$ ] $U(t,s)V(t,s)P(t)x=P(t)x$,
		\item[$v_3)$ ] $V(t,s)U(t,s)P(s)x=P(s)x$,
		\item[$v_4)$ ]$V(t,t_0)P(t)=V(s,t_0)V(t,s)P(t)$,
		\item[$v_5)$ ]$V(t,s)P(t)=P(s)V(t,s)P(t)$,
		\item[$v_6)$ ] $V(t,t)P(t)=P(t)V(t,t)P(t)=P(t)$,
		\end{itemize}
	for all $(t,s),(s,t_0)\in \Delta$ and $x\in X$.
\end{remark}
\begin{definition}
	Let $P_1,P_2,P_3:\mathbb{R}\to\mathcal{B}(X)$ be three families of projectors on $X$. We say that the family $\mathcal{P}=\{P_1,P_2,P_3\}$ is 
	\begin{itemize}
		\item [(i)] \textit{orthogonal} if
		\begin{itemize}
			\item [$o_1)$]$P_1(t)+P_2(t)+P_3(t)=I$ for every $t\geq 0$\\
			and
			\item[$o_2)$] $P_i(t)P_j(t)=0$ for all $t\geq 0$ and all $i,j\in\{1,2,3\}$ with $i\neq j$;
		\end{itemize}
	\item[(ii)] \textit{compatible} with the evolution operator $U:\Delta\to\mathcal{B}(X)$ if
	\begin{itemize}
		\item[$c_1)$] $P_1$ is invariant for $U$\\
		and
		\item[$c_2)$] $P_2,P_3$ are strongly invariant for $U$.
	\end{itemize}
	\end{itemize}
\end{definition}
In what follows we shall denote by $V_j(t,s)$ the isomorphism (given by Remark \ref{rem-proiectorstrong}) from Range $P_j(t)$ to Range $P_j(s)$ and $j\in\{2,3\}$, where $\mathcal{P}=\{P_2,P_2,P_3\}$ is compatible with $U.$
\begin{definition}
We say that a nondecreasing map $h:\mathbb{R}_+\to[1,\infty)$ is a \textit{ growth rate} if 
\begin{align*}
\lim\limits_{t\to\infty}h(t)=\infty.
\end{align*}
\end{definition}
As particular cases of growth rates we remark:
\begin{itemize}
	\item [$r_1)$ ] \textit{exponential rates}, i.e.
	$h(t)=e^{\alpha t}$ with $\alpha>0;$
	\item [$r_2)$ ]\textit{polynomial rates}, i.e.
	$h(t)=(t+1)^\alpha$ with $\alpha>0.$
\end{itemize}
Let $\mathcal{P}=\{P_1, P_2, P_3\}$ be an orthogonal family of projectors which is compatible with the evolution operator $U:\Delta\to\mathcal{B}(X)$ and $h,k,\mu,\nu:\mathbb{R}_+\to[1,\infty)$ be four growth rates.
\begin{definition}\label{def-tricho}
	We say that the pair $(U,\mathcal{P})$ is \textit{$(h,k,\mu,\nu)$-trichotomic} (and we denote $(h,k,\mu,\nu)-t$) if there exists a nondecreasing function $N:\mathbb{R}_+\to [1,\infty)$ such that
	\begin{itemize}
		\item [$(ht_1)$ ]$h(t)\|U(t,s)P_1(s)x\|\leq N(s)h(s)  \|P_1(s)x\|$
		\item [$(kt_1)$ ]$k(t)\|P_2(s)x\|\leq N(t) k(s)  \|U(t,s)P_2(s)x\|$
		\item [$(\mu t_1)$ ]$\mu(s)\|U(t,s)P_3(s)x\|\leq N(s) \mu(t) \|P_3(s)x\|$
		\item [$(\nu t_1)$ ]$\nu(s)\|P_3(s)x\|\leq N(t)\nu(t) \|U(t,s)P_3(s)x\|,$
	\end{itemize}
	for all	$(t,s,x)\in \Delta\times X.$
\end{definition}
In particular, if the function $N$ is constant then we obtain  the  \textit{uniform $(h,k,\mu,\nu)$-trichotomy} property, denoted by $u-(h,k,\mu,\nu)-t$.
\begin{remark}
	As important particular cases of $(h,k,\mu,\nu)$-trichotomy we have:
	\begin{itemize}
		\item[(i)] \textit{(nonuniform) exponential trichotomy} ($et$) and respectively \textit{uniform exponential trichotomy} ($uet$) when the rates $h,k,\mu,\nu$ are exponential rates;
		\item[(ii)]\textit{(nonuniform) polynomial trichotomy} ($pt$) and respectively \textit{uniform polynomial trichotomy} ($upt$) when the rates $h,k,\mu,\nu$ are polynomial rates;
		\item[(iii)]\textit{(nonuniform) $(h,k)-$dichotomy} ($(h,k)-d$) respectively {uniform $(h,k)-$dichotomy} ($u-(h,k)-d$) for $P_3=0$;
		\item[(iv)] \textit{(nonuniform) exponential dichotomy} ($ed$) and respectively \textit{uniform exponential dichotomy} ($ued$) when $P_3=0$ and the rates $h,k$ are exponential rates;
		\item[(v)]\textit{(nonuniform) polynomial dichotomy} (p.d.) and respectively \textit{uniform polynomial dichotomy} ($upd$) when $P_3=0$ and the rates $h,k$ are polynomial rates;
	\end{itemize}
\end{remark}
	 It is obvious that if the pair $(U,\mathcal{P})$ is $u-(h,k,\mu,\nu)-t$ then it is also $(h,k,\mu,\nu)-t$ In general, the reverse of this statement is not valid, phenomenon illustrated by
\begin{example}
Let  $U:\Delta\to\mathcal{B}(X)$ be the evolution operator defined by
\begin{align}
U(t,s)=\frac{u(s)}{u(t)}\left( \frac{h(s)}{h(t)}P_1(s)+ \frac{k(t)}{k(s)}P_2(s)+\frac{\mu(t)}{\mu(s)}\frac{\nu(s)}{\nu(t)}P_3(s)\right) 
\end{align}
where $u,h,k,\mu,\nu:\mathbb{R}_+\to[1,\infty)$ are growth rates and $P_1, P_2, P_3:\mathbb{R}_+\to\mathcal{B}(X)$ are projectors families on $X$ with the properties:
\begin{itemize}
	\item[(i)]$P_1(t)+P_2(t)+P_3(t)=I$ for every $t\geq 0$;
	\item[(ii)]\[ P_i(t)P_j(s)=\left\{ \begin{array}{ll} 0&\mbox{if $i\neq j$} \\
	P_i(s),& \mbox{ if $i=j$},
	\end{array} \right. \]for all $(t,s)\in\Delta.$
	\item[(iii)] $U(t,s)P_i(s)=P_i(t)U(t,s)$ for all $(t,s)\in\Delta$ and all $i\in\{1,2,3\}$.
\end{itemize}
For example if $P_1,P_2,P_3$ are constant and orthogonal then the conditions (i),(ii) and (iii) are satisfied.

 We observe that
\begin{align*}
h(t)\|U(t,s)P_1(s)x\|&=\frac{u(s)h(s)}{u(t)}\|P_1(s)x\|\leq u(s) h(s)\|P_1(s)x\|\\
 u(t)k(s)\|U(t,s)P_2(s)x\|&=u(s){k(s)}\|P_2(s)x\|\geq  k(t)\|P_2(s)x\|\\
 \mu(s)\|U(t,s)P_3(s)x\|&=\frac{u(s)\mu(t)\nu(s)}{u(t)\nu(t)}\|P_3(s)x\|\leq u(s)\mu(t)\|P_3(s)x\|\\
 u(t)\nu(t)\|U(t,s)P_3(s)x\|&=\frac{u(s)\nu(s)\mu(t)}{\mu(s)}\|P_3(s)x\|\geq  \nu(s)\|P_3(s)x\|
\end{align*} for all $(t,s,x)\in\Delta\times X.$

Thus the pair $(U,\mathcal{P})$ is  $(h,k,\mu,\nu)-t$.	\\
If we assume that the pair $(U,\mathcal{P})$ is $u-(h,k,\mu,\nu)-t$ then there exists a real constant $N\geq 1$ such that
\begin{align*}
N u(s)\geq u(t),\text{ for all } (t,s)\in\Delta. 
\end{align*}
Taking $s=0$ we obtain a contradiction.
\end{example}
\begin{remark}
The previous example shows that for all four growth rates $h,k,\mu,\nu$ there exits a pair $(U,\mathcal{P})$ which is $(h,k,\mu,\nu)-t$ and is not $u-(h,k,\mu,\nu)-t$.	
	\end{remark}
In the particular case when $\mathcal{P}$ is compatible with $U$ a characterization of $(h,k,\mu,\nu)-t$ is given by
\begin{proposition}\label{prop strong invariant trichotomy}
	If $\mathcal{P}=
	\{P_1,P_2,P_3\}$ is compatible with the evolution operator $U:\Delta\to\mathcal{B}(X)$ then the pair $(U,\mathcal{P})$ is $(h,k,\mu,\nu)$-trichotomic if and only if there exists a nondecreasing function $ N_1:\mathbb{R}_+\to[1,\infty)$ such that
	\begin{itemize}
		\item [$(ht_2)$ ]$h(t)\|U(t,s)P_1(s)x\|\leq N_1(s)h(s)\|x\|$
		\item [$(kt_2)$ ]$k(t)\|V_2(t,s)P_2(t)x\|\leq N_1(t) k(s)  \|x\|$
		\item [$(\mu t_2)$ ]$\mu(s)\|U(t,s)P_3(s)x\|\leq N_1(s) \mu(t)\|x\|$
		\item[$(\nu t_2)$ ]$\nu(s)\|V_3(t,s)P_3(t)x\|\leq N_1(t) \nu(t)  \|x\|$
	\end{itemize}
	for all $ (t,s,x)\in \Delta\times X$, where $V_j(t,s)$ for $j\in\{2,3\}$ is the isomorphism from Range $P_j(t)$ to Range $P_j(s)$.	
\end{proposition}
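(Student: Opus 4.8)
The plan is to prove the two implications separately, exploiting that the two ``stable-type'' estimates $(ht)$ and $(\mu t)$ involve only the forward operator $U(t,s)$, while the two ``backward-type'' estimates $(kt)$ and $(\nu t)$ are rephrased through the isomorphisms $V_2,V_3$ supplied by Remark \ref{rem-proiectorstrong}. The essential tools are the idempotency $P_j^2=P_j$, the (strong) invariance relations, and in particular the identities $U(t,s)V_j(t,s)P_j(t)x=P_j(t)x$ and $V_j(t,s)U(t,s)P_j(s)x=P_j(s)x$ (properties $v_2)$ and $v_3)$). Since each $P_j(s)$ belongs to $\mathcal{B}(X)$, I will also use the finite bounds $\|P_j(s)x\|\le\|P_j(s)\|\,\|x\|$.

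For the sufficiency part I would assume $(ht_2)$--$(\nu t_2)$ and recover Definition \ref{def-tricho}. Replacing $x$ by $P_1(s)x$ in $(ht_2)$ and by $P_3(s)x$ in $(\mu t_2)$ and using $P_j^2=P_j$ turns the right-hand sides into $N_1(s)h(s)\|P_1(s)x\|$ and $N_1(s)\mu(t)\|P_3(s)x\|$, which are exactly $(ht_1)$ and $(\mu t_1)$ with $N=N_1$. For $(kt_1)$ I would substitute $x\mapsto U(t,s)P_2(s)x$ in $(kt_2)$: by invariance $P_2(t)U(t,s)P_2(s)x=U(t,s)P_2(s)x$, and then $v_3)$ gives $V_2(t,s)U(t,s)P_2(s)x=P_2(s)x$, so the left-hand side collapses to $k(t)\|P_2(s)x\|$ while the right-hand side becomes $N_1(t)k(s)\|U(t,s)P_2(s)x\|$, i.e. $(kt_1)$. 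The condition $(\nu t_1)$ follows in the same way from $(\nu t_2)$ with $x\mapsto U(t,s)P_3(s)x$.

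For the necessity part I would start from $(ht_1)$--$(\nu t_1)$. The estimates $(ht_2)$ and $(\mu t_2)$ are immediate, since bounding $\|P_1(s)x\|\le\|P_1(s)\|\,\|x\|$ in $(ht_1)$ and $\|P_3(s)x\|\le\|P_3(s)\|\,\|x\|$ in $(\mu t_1)$ already gives the desired form. For $(kt_2)$ I would apply $(kt_1)$ with $x$ replaced by $V_2(t,s)P_2(t)x$; this vector is fixed by $P_2(s)$ by $v_5)$ and satisfies $U(t,s)V_2(t,s)P_2(t)x=P_2(t)x$ by $v_2)$, so $(kt_1)$ yields $k(t)\|V_2(t,s)P_2(t)x\|\le N(t)k(s)\|P_2(t)x\|\le N(t)k(s)\|P_2(t)\|\,\|x\|$, which is $(kt_2)$. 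The estimate $(\nu t_2)$ is obtained analogously from $(\nu t_1)$ using $V_3$ and $P_3$.

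The idempotency substitutions are routine; the step that genuinely requires compatibility is the conversion between the $U$-formulation and the $V$-formulation of the $(kt)$ and $(\nu t)$ conditions, where strong invariance (through $v_2),v_3),v_5)$) is indispensable. The remaining obstacle is the bookkeeping of the constant: the necessity direction produces factors of the form $N(\cdot)\|P_j(\cdot)\|$, so I would fix a single nondecreasing majorant, e.g. $N_1(\tau)=N(\tau)\bigl(1+\sup_{0\le\sigma\le\tau}(\|P_1(\sigma)\|+\|P_2(\sigma)\|+\|P_3(\sigma)\|)\bigr)$, and check that it is $\ge 1$, nondecreasing, and simultaneously dominates the constants arising in all four estimates; this is admissible as long as the projector norms remain locally bounded, which holds since each $P_j(\sigma)\in\mathcal{B}(X)$.
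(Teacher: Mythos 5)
Your proposal is correct and follows essentially the same route as the paper: necessity via the substitutions $x\mapsto V_j(t,s)P_j(t)x$ in $(kt_1),(\nu t_1)$ together with $v_2),v_5)$ and the bound $\|P_j(\cdot)x\|\le\|P_j(\cdot)\|\,\|x\|$, sufficiency via $x\mapsto P_1(s)x$, $P_3(s)x$ and $x\mapsto U(t,s)P_j(s)x$ with $v_3)$, and the same kind of nondecreasing majorant $N_1$ built from $N$ and the projector norms. The only (shared) caveat is the implicit local boundedness of $s\mapsto\|P_j(s)\|$ needed for the supremum defining $N_1$ to be finite, which the paper also assumes tacitly.
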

\begin{proof}
	\textit{Necessity.} By Remark \ref{rem-proiectorstrong} and the Definition \ref{def-tricho} we obtain
	\begin{align*}
(ht_2)\thickspace& \thickspace	h(t)\|U(t,s)P_1(s)x\|\leq N(s)h(s)\|P_1(s)x\|\leq N(s)\|P_1(s)\|h(s)\|x\|\\
&\leq N_1(s)h(s)\|x\|\\
	(kt_2)\thickspace &\thickspace k(t)\|V_2(t,s)P_2(t)x\|=k(t)\|P_2(s)V_2(t,s)P_2(t)x\|\\
	&\leq N(t)k(s)\|U(t,s)P_2(s)V_2(t,s)P_2(t)x\|\\
	&=N(t)k(s)\|P_2(t)x\|\leq N(t)\|P_2(t)\|k(s)\|x\|\leq N_1(t)k(s)\|x\|\\
	(\mu t_2)\thickspace &\thickspace \mu(s)\|U(t,s)P_3(s)x\|\leq N(s) \mu(t)\|P_3(s)x\|\leq N(s)\|P_3(s)\|\mu(t)\|x\|\\
	&\leq N_1(s)\mu(t)\|x\|\\
(\nu t_2 )\thickspace&\thickspace	\nu(s)\|V_3(t,s)P_3(t)x\|=\nu(s)\|P_3(s)V_3(t,s)P_3(t)x\|\\
&\leq N(t)\nu(t)\|U(t,s)P_3(s)V_3(t,s)P_3(t)x\|\\
	&=N(t)\nu(t)\|P_3(t)x\|\leq N(t)\|P_3(t)\|\nu(t)\|x\|\leq N_1(t)\nu(t)\|x\|,
	\end{align*}for all $(t,s,x)\in\Delta\times X,$ where 
	$$N_1(t)=\sup_{s\in[0,t]}N(s)(\|P_1(s)\|+\|P_2(s)\|+\|P_3(s)\|).$$
	
	\textit{Sufficiency.} The implications $(ht_2)\Rightarrow(ht_1)$ and $(\mu t_2)\Rightarrow(\mu t_1)$ result by replacing $x$ with $P_1(s)x$ respectively by $P_3(s)x$.
	
	For the implications $(kt_2)\Rightarrow(kt_1)$ and $(\nu t_2)\Rightarrow (\nu t_1)$ we have (by Remark \ref{rem-proiectorstrong})
	\begin{align*}
	k(t)\|P_2(s)x\|&=k(t)\|V_2(t,s)U(t,s)P_2(s)x\|\leq N(t)k(s)\|U(t,s)P_2(s)x\|\\
	&\text{and}\\
	\nu(s)\|P_3(s)x\|&=\nu(s)\|V_3(t,s)U(t,s)P_3(s)x\|\leq N(t)\nu(t)\|U(t,s)P_3(s)x\|,
	\end{align*}for all $(t,s,x)\in\Delta\times X.$
\end{proof}
A similar characterization for the $u-(h,k,\mu,\nu)-t$ concept results under the hypotheses of boundedness of the projectors $P_1,P_2,P_3$. A characterization with compatible family of projectors without assuming the boundedness of projectors is given by
\begin{proposition}\label{prop strong invariant trichotomy uniform}
	If $\mathcal{P}=
	\{P_1,P_2,P_3\}$ is compatible with the evolution operator $U:\Delta\to\mathcal{B}(X)$ then the pair $(U,\mathcal{P})$ is uniformly$-(h,k,\mu,\nu)-$ trichotomic if and only if there exists a constant $ N\geq 1$ such that
	\begin{itemize}
		\item [$(uht_1)$ ]$h(t)\|U(t,s)P_1(s)x\|\leq Nh(s)\|P_1(s)x\|$
		\item [$(ukt_1)$ ]$k(t)\|V_2(t,s)P_2(t)x\|\leq N k(s)  \|P_2(t)x\|$
		\item [$(u\mu t_1)$ ]$\mu(s)\|U(t,s)P_3(s)x\|\leq N \mu(t)\|P_3(s)x\|$
		\item[$(u\nu t_1)$ ]$\nu(s)\|V_3(t,s)P_3(t)x\|\leq N \nu(t)  \|P_3(t)x\|$
	\end{itemize}
	for all $ (t,s,x)\in \Delta\times X$, where $V_j(t,s)$ for $j\in\{2,3\}$ is the isomorphism from Range $P_j(t)$ to Range $P_j(s)$.	
\end{proposition}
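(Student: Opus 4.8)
The plan is to mirror the proof of Proposition \ref{prop strong invariant trichotomy}, except that here the projector factors are retained on \emph{both} sides of each inequality, so no bound on $\|P_j\|$ is needed; this is precisely what makes the statement hold for an arbitrary compatible (not necessarily bounded) family. First I observe that, since uniform $(h,k,\mu,\nu)$-trichotomy is by definition the property in Definition \ref{def-tricho} with the function $N$ replaced by a constant, the inequalities $(uht_1)$ and $(u\mu t_1)$ are \emph{literally} $(ht_1)$ and $(\mu t_1)$. Hence only the pairs $(kt_1)\leftrightarrow(ukt_1)$ and $(\nu t_1)\leftrightarrow(u\nu t_1)$ require argument, and the whole proof reduces to two substitution arguments handled by the relations $v_2)$--$v_5)$ of Remark \ref{rem-proiectorstrong}.

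For necessity, I would assume the uniform trichotomy with constant $N$ and obtain $(ukt_1)$ by applying $(kt_1)$ to the vector $V_2(t,s)P_2(t)x$ in place of $x$. By $v_5)$ one has $P_2(s)V_2(t,s)P_2(t)x=V_2(t,s)P_2(t)x$, so the left-hand side of $(kt_1)$ becomes $k(t)\|V_2(t,s)P_2(t)x\|$; by $v_2)$ one has $U(t,s)V_2(t,s)P_2(t)x=P_2(t)x$, so the right-hand side becomes $Nk(s)\|P_2(t)x\|$, which is exactly $(ukt_1)$. The inequality $(u\nu t_1)$ follows from $(\nu t_1)$ in the same way, substituting $V_3(t,s)P_3(t)x$ and using the analogous $v_5)$ and $v_2)$ identities for $P_3,V_3$.

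For sufficiency, I would assume $(uht_1)$--$(u\nu t_1)$ and recover $(kt_1)$ by applying $(ukt_1)$ with $x$ replaced by $U(t,s)x$. Invariance of $P_2$ gives $P_2(t)U(t,s)x=U(t,s)P_2(s)x$, and $v_3)$ gives $V_2(t,s)U(t,s)P_2(s)x=P_2(s)x$; substituting these, $(ukt_1)$ becomes $k(t)\|P_2(s)x\|\leq Nk(s)\|U(t,s)P_2(s)x\|$, i.e. $(kt_1)$. The remaining implication $(u\nu t_1)\Rightarrow(\nu t_1)$ is obtained by the same substitution $x\mapsto U(t,s)x$ together with the invariance and $v_3)$ identities for $P_3,V_3$.

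The computations themselves are routine; the only thing to watch is the bookkeeping of ranges, namely that each chosen argument already lies in the correct Range $P_j$ so that the collapse identities $v_2)$, $v_3)$, $v_5)$ apply exactly as stated. I expect this range-tracking --- rather than any analytic estimate --- to be the main (and only) point of care, since, unlike in Proposition \ref{prop strong invariant trichotomy}, there is no need to estimate $\|P_j\|$, the constant $N$ being transferred verbatim between the two formulations.
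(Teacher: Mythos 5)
Your proof is correct and follows essentially the same route as the paper, which simply states that the argument is ``similar to the proof of Proposition \ref{prop strong invariant trichotomy}'': the same substitutions $x\mapsto V_j(t,s)P_j(t)x$ (for necessity) and $x\mapsto U(t,s)x$ (for sufficiency), combined with the identities $v_2)$, $v_3)$, $v_5)$ of Remark \ref{rem-proiectorstrong} and the invariance of the projectors. Your observation that keeping the projector factors on both sides removes any need to estimate $\|P_j(\cdot)\|$ — which is exactly what lets the constant $N$ transfer verbatim — is the correct reading of why this uniform variant needs no boundedness hypothesis.
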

\begin{proof}
	It is similar to the proof of Proposition \ref{prop strong invariant trichotomy}.
\end{proof}
\section{The main result}
In this section we give a characterization of $(h,k,\mu,\nu)-$trichotomy in terms of a certain type of uniform $(h,k,\mu,\nu)-$trichotomy using families of norms equivalent with the norms of $X$. Firstly we introduce
\begin{definition}\label{def-norma-compatibila}
	A family $\mathcal{N}=\{\|\cdot\|_t: t\geq0\}$ of norms on the Banach space $X$ (endowed with the norm $\|\cdot\|$) is called \textit{compatible} to the norm $\|\cdot\|$ if there exists a nondecreasing map $C:\mathbb{R}_+\to[1,\infty)$ such that 
	\begin{align}
	\|x\|&\leq \|x\|_t\leq C(t)\|x\|,\label{normprop-fara-proiectori1}
	\end{align}
		for all $(t,x)\in\mathbb{R}_+\times X$.
\end{definition}
\begin{proposition}\label{ex-norma-trichotomie2}
	If the pair $(U,\mathcal{P})$ is $(h,k,\mu,\nu)-t$ then the family of norms $\mathcal{N}_1=\{\|\cdot\|_t:t\geq0\}$ given by
	\begin{align}
	\|x\|_t&=\sup_{\tau\geq t} \frac{h(\tau)}{h(t)}\|U(\tau,t)P_1(t)x\|+\sup_{r\leq t} \frac{k(t)}{k(r)}\|V_2(t,r)P_2(t)x\|\nonumber\\
	&+\sup_{\tau\geq t} \frac{\mu(t)}{\mu(\tau)}\|U(\tau,t)P_3(t)x\|\label{norma-tricho-sus}
	\end{align}
	is compatible with $\|\cdot\|$.
\end{proposition}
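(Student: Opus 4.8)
The plan is to verify the two defining inequalities of Definition \ref{def-norma-compatibila}, namely $\|x\|\leq\|x\|_t\leq C(t)\|x\|$ for a suitable nondecreasing $C:\mathbb{R}_+\to[1,\infty)$. Along the way the upper bound will also guarantee that each of the three suprema in \eqref{norma-tricho-sus} is finite, so that $\|\cdot\|_t$ is genuinely a norm, its homogeneity and triangle inequality being inherited termwise from $\|\cdot\|$, and its positive definiteness following from the lower estimate $\|x\|_t\geq\|x\|$.

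For the lower estimate I would evaluate each supremum at its trivial index. Taking $\tau=t$ in the first supremum gives $\|U(t,t)P_1(t)x\|=\|P_1(t)x\|$ by $(e_1)$; taking $r=t$ in the second gives $\|V_2(t,t)P_2(t)x\|=\|P_2(t)x\|$ by property $v_6$ of Remark \ref{rem-proiectorstrong}; and taking $\tau=t$ in the third gives $\|P_3(t)x\|$. Hence $\|x\|_t\geq\|P_1(t)x\|+\|P_2(t)x\|+\|P_3(t)x\|$, and the orthogonality condition $o_1$ together with the triangle inequality yields $\|x\|_t\geq\|(P_1(t)+P_2(t)+P_3(t))x\|=\|x\|$.

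For the upper estimate I would bound each supremum using Definition \ref{def-tricho}. The first and third terms are immediate: $(ht_1)$ gives $\frac{h(\tau)}{h(t)}\|U(\tau,t)P_1(t)x\|\leq N(t)\|P_1(t)x\|$ for every $\tau\geq t$, and $(\mu t_1)$, read with later time $\tau$ and earlier time $t$, gives $\frac{\mu(t)}{\mu(\tau)}\|U(\tau,t)P_3(t)x\|\leq N(t)\|P_3(t)x\|$. The middle term is the delicate one, since it is written through the isomorphism $V_2$ rather than through $U$ itself. Here I would set $z=V_2(t,r)P_2(t)x$, observe that $z\in\operatorname{Range}P_2(r)$ (so $P_2(r)z=z$) by $v_5$ and that $U(t,r)z=P_2(t)x$ by $v_2$, and then apply $(kt_1)$ with $s=r$ to the vector $z$ to obtain $k(t)\|z\|\leq N(t)k(r)\|U(t,r)z\|=N(t)k(r)\|P_2(t)x\|$, that is $\frac{k(t)}{k(r)}\|V_2(t,r)P_2(t)x\|\leq N(t)\|P_2(t)x\|$. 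Summing the three bounds and using $\|P_i(t)x\|\leq\|P_i(t)\|\,\|x\|$ gives $\|x\|_t\leq N(t)\bigl(\|P_1(t)\|+\|P_2(t)\|+\|P_3(t)\|\bigr)\|x\|$.

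Finally, to obtain a nondecreasing bounding function I would set $C(t)=\sup_{s\in[0,t]}N(s)\bigl(\|P_1(s)\|+\|P_2(s)\|+\|P_3(s)\|\bigr)$, exactly as in the proof of Proposition \ref{prop strong invariant trichotomy}; this dominates the pointwise bound, is nondecreasing by construction, and lies in $[1,\infty)$ because $N(s)\geq1$ and $\|P_1(s)\|+\|P_2(s)\|+\|P_3(s)\|\geq\|P_1(s)+P_2(s)+P_3(s)\|=\|I\|=1$. The main obstacle is precisely the middle term: one must unwind the $V_2$-isomorphism through Remark \ref{rem-proiectorstrong} so that hypothesis $(kt_1)$, which is phrased in terms of $U$, can be brought to bear.
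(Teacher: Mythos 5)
Your proof is correct and follows essentially the same route as the paper: the lower bound by evaluating each supremum at the trivial index $\tau=t=r$, and the upper bound via the estimates $(ht_2)$, $(kt_2)$, $(\mu t_2)$ of Proposition \ref{prop strong invariant trichotomy}, whose necessity proof contains exactly the $V_2$-unwinding through $v_2$ and $v_5$ that you carry out inline. The only difference is that the paper cites that proposition to get $\|x\|_t\leq 3N_1(t)\|x\|$ directly, whereas you re-derive it from Definition \ref{def-tricho}; the resulting bounding functions coincide up to the harmless factor $3$.
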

\begin{proof}
	For $\tau=t=r$ in (\ref{norma-tricho-sus}) we  obtain that
	\begin{align*}
	\|x\|_t&\geq \|P_1(t)x\|+\|P_2(t)x\|+\|P_3(t)x\|\geq \|x\|
	\end{align*}
	for all $t\geq 0$.
	
	If the pair $(U,\mathcal{P})$ is $(h,k,\mu,\nu)-t$ then by Proposition  \ref{prop strong invariant trichotomy} there exits a nondecreasing function $N_1:\mathbb{R}_+\to\mathcal{B}(X)$ such that
	\begin{align*}
	\|x\|_t\leq 3N_1(t)\|x\|, \text{ for all } (t,x)\in\mathbb{R}_+\times X.
	\end{align*}
	Finally we obtain that $\mathcal{N}_1$ is compatible with $\|\cdot\|.$
\end{proof}
\begin{proposition}\label{ex-norma-trichotomie1}
	If the pair $(U,\mathcal{P})$ is $(h,k,\mu,\nu)-t$ then the family of norms $\mathcal{N}_2=\{\||\cdot\||_t,t\geq0\}$ defined by
	\begin{align}
	\||x\||_t&=\sup_{\tau\geq t} \frac{h(\tau)}{h(t)}\|U(\tau,t)P_1(t)x\|+ \sup_{r\leq t} \frac{k(t)}{k(r)}\|V_2(t,r)P_2(t)x\|\nonumber\\
	&+
	\sup_{r\leq t} \frac{\nu(r)}{\nu(t)}\|V_3(t,r)P_3(t)x\|\label{norma-tricho-jos}
	\end{align}
	is compatible with $\|\cdot\|.$
\end{proposition}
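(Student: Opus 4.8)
The plan is to mirror the proof of Proposition \ref{ex-norma-trichotomie2}, since $\mathcal{N}_2$ differs from $\mathcal{N}_1$ only in its third summand: the term $\sup_{\tau\geq t}\frac{\mu(t)}{\mu(\tau)}\|U(\tau,t)P_3(t)x\|$ is replaced by $\sup_{r\leq t}\frac{\nu(r)}{\nu(t)}\|V_3(t,r)P_3(t)x\|$. I must verify the two inequalities of Definition \ref{def-norma-compatibila}, namely $\|x\|\leq\||x\||_t\leq C(t)\|x\|$ for a suitable nondecreasing $C:\mathbb{R}_+\to[1,\infty)$, the first giving the lower compatibility bound and the second the upper one.

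For the lower bound I would evaluate each supremum at its diagonal index. Taking $\tau=t$ in the first term and $r=t$ in the second and third, and using $(e_1)$ together with property $v_6)$ of Remark \ref{rem-proiectorstrong} (so that $U(t,t)P_1(t)x=P_1(t)x$, $V_2(t,t)P_2(t)x=P_2(t)x$ and $V_3(t,t)P_3(t)x=P_3(t)x$), I obtain
\[\||x\||_t\geq \|P_1(t)x\|+\|P_2(t)x\|+\|P_3(t)x\|\geq\|P_1(t)x+P_2(t)x+P_3(t)x\|=\|x\|,\]
where the last equality uses the orthogonality relation $o_1)$ and the triangle inequality handles the middle step.

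For the upper bound I would estimate each of the three summands separately via Proposition \ref{prop strong invariant trichotomy}, which furnishes a nondecreasing $N_1$. Applying $(ht_2)$ with $(t,s)\to(\tau,t)$ gives $\frac{h(\tau)}{h(t)}\|U(\tau,t)P_1(t)x\|\leq N_1(t)\|x\|$ for every $\tau\geq t$; applying $(kt_2)$ with $s\to r$ gives $\frac{k(t)}{k(r)}\|V_2(t,r)P_2(t)x\|\leq N_1(t)\|x\|$ for every $r\leq t$; and — this is the only place the argument departs from that of Proposition \ref{ex-norma-trichotomie2} — applying $(\nu t_2)$ with $s\to r$ gives $\frac{\nu(r)}{\nu(t)}\|V_3(t,r)P_3(t)x\|\leq N_1(t)\|x\|$ for every $r\leq t$. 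Passing to the suprema and adding the three bounds yields $\||x\||_t\leq 3N_1(t)\|x\|$, so that $C(t)=3N_1(t)$ works and $\mathcal{N}_2$ is compatible with $\|\cdot\|$.

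Since each estimate is a one-line consequence of Proposition \ref{prop strong invariant trichotomy}, there is no genuine obstacle here; the only point demanding attention is to control the third summand by the $V_3$/$\nu$ estimate $(\nu t_2)$ (which governs the expansive part of the central manifold measured by $\nu$) rather than by the $\mu$-estimate $(\mu t_2)$ used for $\mathcal{N}_1$. I would also remark that the three suprema are finite precisely because these bounds are uniform in $\tau$ (respectively in $r$), which is what makes $\||\cdot\||_t$ a well-defined norm on $X$ for each $t\geq 0$.
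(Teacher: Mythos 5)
Your proposal is correct and follows essentially the same route as the paper: the lower bound by evaluating the suprema at $\tau=t=r$ (with $v_6)$ and $o_1)$) and the upper bound $\||x\||_t\leq 3N_1(t)\|x\|$ by applying $(ht_2)$, $(kt_2)$ and $(\nu t_2)$ of Proposition \ref{prop strong invariant trichotomy} termwise. The paper merely states the upper estimate without spelling out the three applications; your version makes explicit exactly the steps it leaves implicit, including the correct substitution of $(\nu t_2)$ for $(\mu t_2)$ in the third summand.
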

\begin{proof}
If the pair $(U,\mathcal{P})$ is $(h,k,\mu,\nu)-t$ then by Proposition  \ref{prop strong invariant trichotomy} there exits a nondecreasing function $N_1:\mathbb{R}_+\to\mathcal{B}(X)$ such that 
	\begin{align*}
	\||x\||_t\leq 3N_1(t)\|x\|, \text{ for all } (t,x)\in\mathbb{R}_+\times X.
	\end{align*}
	On the other hand, for  $\tau=t=r$ in the definition of $\||\cdot\||_t$ we  obtain
	\begin{align*}
	\||x\||_t&\geq \|P_1(t)x\|+\|P_2(t)x\|+\|P_3(t)x\|\geq\|x\|.
	\end{align*}
In consequence, by Definition \ref{def-norma-compatibila} it results that the family of norms $\mathcal{N}_2$ is compatible to $\|\cdot\|.$
\end{proof}	
	
The main result of this paper is
\begin{theorem}\label{unif=neunif-trichotomie}If $\mathcal{P}=\{P_1,P_2,P_3\}$ is compatible with the evolution operator $U:\Delta\to\mathcal{B}(X)$ then
	the pair $(U,\mathcal{P})$ is $(h,k,\mu,\nu)$-trichotomic if and only if there exist two families of norms $\mathcal{N}_1=\{\|\cdot\|_t: t\geq0\}$ and $\mathcal{N}_2=\{\||\cdot\||_t: t\geq0\}$ compatible with the norm $\|\cdot\|$ such that the following take place
	\begin{itemize}
		\item[($ht_3$) ] $h(t)\|U(t,s)P_1(s)x\|_t\leq  h(s) \|P_1(s)x\|_s$
		\item[($kt_3$) ] $k(t)\||V_2(t,s)P_2(t)x\||_s\leq  k(s) \||P_2(t)x\||_t$
		\item [$(\mu t_3)$ ]$\mu(s)\|U(t,s)P_3(s)x\|_t\leq  \mu(t) \|P_3(s)x\|_s$
		\item[$(\nu t_3)$ ]$\nu(s)\||V_3(t,s)P_3(t)x\||_s\leq  \nu(t)  \||P_3(t)x|\|_t$
	\end{itemize}
	 for all $(t,s,x)\in\Delta\times X.$
\end{theorem}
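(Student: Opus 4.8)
The plan is to prove both implications by using, for the ``only if'' direction, exactly the two Lyapunov-type families of norms already constructed in Proposition \ref{ex-norma-trichotomie2} and Proposition \ref{ex-norma-trichotomie1}. Those propositions already guarantee that $\mathcal{N}_1$ and $\mathcal{N}_2$ are compatible with $\|\cdot\|$, so the entire content of the necessity direction is to verify that they satisfy $(ht_3)$--$(\nu t_3)$. The two families are tailored so that the stable part (rate $h$, projector $P_1$) and the $\mu$-part of the central manifold sit in $\mathcal{N}_1$, while the unstable part (rate $k$, projector $P_2$) and the $\nu$-part of the central manifold sit in $\mathcal{N}_2$; accordingly $(ht_3)$ and $(\mu t_3)$ will be checked in $\|\cdot\|_t$ and $(kt_3)$, $(\nu t_3)$ in $\||\cdot\||_t$. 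For the ``if'' direction I would use no specific norm at all: I would feed the abstract inequalities $(ht_3)$--$(\nu t_3)$ through the compatibility bounds \eqref{normprop-fara-proiectori1} and recover the trichotomy inequalities of Definition \ref{def-tricho}.

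\emph{Necessity.} Fix $(t,s)\in\Delta$ and $x\in X$. The decisive observation is that each vector on a left-hand side lands in the range of a single projector at the new time: by invariance of $P_1,P_3$ the vectors $U(t,s)P_1(s)x$ and $U(t,s)P_3(s)x$ lie in $\mathrm{Range}\,P_1(t)$ and $\mathrm{Range}\,P_3(t)$, while by property $v_1)$ of Remark \ref{rem-proiectorstrong} the vectors $V_2(t,s)P_2(t)x$ and $V_3(t,s)P_3(t)x$ lie in $\mathrm{Range}\,P_2(s)$ and $\mathrm{Range}\,P_3(s)$. Orthogonality ($o_2)$) then annihilates two of the three supremum terms, so the relevant norm collapses to its single matching term. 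After this collapse the computation is mechanical: for $(ht_3)$ and $(\mu t_3)$ one uses the evolution law $U(\tau,t)U(t,s)=U(\tau,s)$ to rewrite $h(t)\|U(t,s)P_1(s)x\|_t=\sup_{\tau\geq t}h(\tau)\|U(\tau,s)P_1(s)x\|$ and likewise for the $\mu$-term, whereas for $(kt_3)$ and $(\nu t_3)$ one uses $v_4)$ to rewrite $V_2(s,r)V_2(t,s)P_2(t)=V_2(t,r)P_2(t)$ (and the analogue for $V_3$). In each case the desired inequality is then precisely the statement that a supremum over the smaller index set $\{\tau\geq t\}$ (resp.\ $\{r\leq s\}$) cannot exceed the supremum over the larger set $\{\tau\geq s\}$ (resp.\ $\{r\leq t\}$), which holds because $s\leq t$.

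\emph{Sufficiency.} Suppose $\mathcal{N}_1$ and $\mathcal{N}_2$ are compatible, with nondecreasing constants $C_1,C_2$ as in \eqref{normprop-fara-proiectori1}. For $(ht_3)$ I would sandwich $\|U(t,s)P_1(s)x\|\leq\|U(t,s)P_1(s)x\|_t$ on the left and $\|P_1(s)x\|_s\leq C_1(s)\|P_1(s)x\|$ on the right, producing $(ht_1)$ with $N(s)=C_1(s)$; the same sandwich applied to $(\mu t_3)$ yields $(\mu t_1)$. For the unstable and central $\nu$-parts the compatibility bounds first give $k(t)\|V_2(t,s)P_2(t)x\|\leq C_2(t)k(s)\|P_2(t)x\|$ and $\nu(s)\|V_3(t,s)P_3(t)x\|\leq C_2(t)\nu(t)\|P_3(t)x\|$; to convert these $V_j$-estimates into the $U$-estimates $(kt_1)$ and $(\nu t_1)$ I would substitute $x$ by $U(t,s)P_2(s)y$ (resp.\ $U(t,s)P_3(s)y$), using $P_2(t)U(t,s)P_2(s)y=U(t,s)P_2(s)y$ together with $v_3)$, namely $V_2(t,s)U(t,s)P_2(s)y=P_2(s)y$. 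Taking $N(t)=\max\{C_1(t),C_2(t)\}$, which is nondecreasing and at least $1$, supplies a single admissible $N$ and establishes $(h,k,\mu,\nu)$-trichotomy.

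The genuinely substantive step is the necessity direction, and within it the ``collapse to one term''. One must keep careful track of which projector's range each image falls into and confirm that the surviving supremum term is exactly the one carrying the correct rate; once that bookkeeping is settled the argument reduces to the trivial monotonicity of suprema over nested intervals. The sufficiency direction is routine once the $V_j\leftrightarrow U$ dictionary of Remark \ref{rem-proiectorstrong} (namely properties $v_2)$ and $v_3)$) is used to pass between the isomorphism form and the evolution-operator form of the estimates.
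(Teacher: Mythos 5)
Your proof is correct and follows essentially the same route as the paper: necessity uses the Lyapunov norms of Propositions \ref{ex-norma-trichotomie2} and \ref{ex-norma-trichotomie1}, collapses each norm to its single surviving supremum term via orthogonality and the invariance properties, and concludes by monotonicity of suprema over nested index sets, exactly as in the paper's argument. The only cosmetic difference is in sufficiency, where the paper derives $(ht_2)$--$(\nu t_2)$ and cites Proposition \ref{prop strong invariant trichotomy}, while you inline that proposition's proof (using $v_3)$ to pass from the $V_j$-estimates back to the $U$-estimates) and land directly on Definition \ref{def-tricho}.
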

\begin{proof}
	\textit{Necessary.}
	If the pair $(U,\mathcal{P})$ is  $(h,k,\mu,\nu)$-trichotomic then by Propositions \ref{ex-norma-trichotomie2}  and  \ref{ex-norma-trichotomie1} that there exist the families of norms $\mathcal{N}_1=\{\|\cdot\|_t: t\geq0\}$ and $\mathcal{N}_2=\{\||\cdot\||_t: t\geq0\}$ compatible with $\|\cdot\|$.

$\boldsymbol{(ht_1)\Rightarrow(ht_3)}.$ We have that
\begin{align*}
h(t)\|U(t,s)P_1(s)x\|_t&=h(t)\|P_1(t)U(t,s)P_1(s)x\|_t\\
&=h(t)\sup_{\tau\geq t} \frac{h(\tau)}{h(t)}\|U(\tau,t)P_1(t)U(t,s)P_1(s)x\|\\
&\leq h(s)\sup_{\tau\geq s} \frac{h(\tau)}{h(s)}\|U(\tau,s)P_1(s)x\|= h(s)\|P_1(s)\|_s,
\end{align*}for all $(t,s,x)\in\Delta\times X$.

$\boldsymbol{(kt_2)\Rightarrow(kt_3)}.$ If $(kt_2)$ holds then
\begin{align*}
k(t)\||V_2(t,s)P_2(t)x\||_s&=k(t)\||P_2(s)V_2(t,s)P_2(t)x\||_s\\
&=k(t)\sup_{r\leq s} \frac{k(s)}{k(r)}\|V_2(s,r)P_2(s)V_2(t,s)P_2(t)x\|\\
&\leq k(s) \sup_{r\leq t} \frac{k(t)}{k(r)}\|V_2(t,r)P_2(t)x\|= k(s)\||P_2(t)\||_t
\end{align*}for all $(t,s,x)\in\Delta\times X$.

$\boldsymbol{(\mu t_1)\Rightarrow(\mu t_3)}.$ If $(U,\mathcal{P})$ is $(h,k,\mu,\nu)-$ trichotomic then by $(\mu t_1)$ it results 
\begin{align*}
\mu(s)\|U(t,s)P_3(s)x\|_t&=\mu(s)\|P_3(t)U(t,s)P_3(s)x\|_t\\
&=\mu(s)\sup_{\tau\geq t} \frac{\mu(t)}{\mu(\tau)}\|U(\tau,t)P_3(t)U(t,s)P_3(s)x\|\\
&= \mu(s)\sup_{\tau\geq t}\frac{\mu(t)}{\mu(\tau)}\|U(\tau,s)P_3(s)x\| \leq\mu(t)\sup_{\tau\geq s}\frac{\mu(s)}{\mu(\tau)}\|U(\tau,s)P_3(s)x\|\\
&=\mu(t)\|P_3(s)x\|_s,
\end{align*}for all $(t,s,x)\in\Delta\times X$.

$\boldsymbol{(\nu t_2)\Rightarrow(\nu t_3)}.$ Using Proposition \ref{normprop-fara-proiectori1} we obtain
\begin{align*}
\nu(s)\||V_3(t,s)P_3(t)x\||_s&=\nu(s)\||P_3(s)V_3(t,s)P_3(t)x\||_s\\
&=\nu(s)\sup_{r\leq s} \frac{\nu(r)}{\nu(s)}\|V_3(s,r)P_3(s)V_3(t,s)P_3(t)x\|\\
&\leq \nu(t)\sup_{r\leq t}\frac{\nu(r)}{\nu(t)}\|V_3(t,r)P_3(t)x\| =\nu(t)\||P_3(t)x\||_t,
\end{align*}
for all $(t,s,x)\in\Delta\times X$.

\textit{Sufficiency.}We assume that there are two families of norms $\mathcal{N}_1=\{\|\cdot\|_t: t\geq0\}$ and $\mathcal{N}_2=\{\||\cdot\||_t: t\geq0\}$ compatible with the norm $\|\cdot\|$ such that the inequalities $(ht_3)--(\nu t_3)$ take place. Let $(t,s,x)\in\Delta\times X$,

$\boldsymbol{(ht_3)\Rightarrow(ht_2)}.$ The inequality $(ht_3)$ and Definition \ref{def-norma-compatibila} imply that
\begin{align*}
h(t)\|U(t,s)P_1(s)x\|&\leq \|U(t,s)P_1(s)x\|_t\leq  h(s)\|P_1(s)x\|_s\\
&\leq h(s)C(s)\|P_1(s)x\| \leq C(s)\|P_1(s)\|h(s) \|x\|.
\end{align*}
$\boldsymbol{(kt_3)\Rightarrow(kt_2)}.$ Similarly,
\begin{align*}
k(t)\|V_2(t,s)P_2(t)x\|&\leq k(t)\||V_2(t,s)P_2(t)x\||_s\leq  k(s)\||P_2(t)\||_t\\
&\leq  k(s)C(t)\|P_2(t)x\|\leq C(t)\|P_2(t)\|k(s)\|x\|. 
\end{align*}
$\boldsymbol{(\mu t_3)\Rightarrow(\mu t_2)}.$ From Definition \ref{def-norma-compatibila} and inequality $(\mu t_3)$ we have
\begin{align*}
\mu(s)\|U(t,s)P_3(s)x\|&\leq \mu(s) \|U(t,s)P_3(s)x\|_t
\leq \mu(t)\|P_3(s)x\|_s\\
&\leq C(s) \mu(t)\|P_3(s)x\|\leq C(s)\|P_3(s)\| \mu(t)\|x\|.
\end{align*}
$\boldsymbol{(\nu t_3)\Rightarrow(\nu t_2)}.$ Similarly,
\begin{align*}
\nu(s)\|V_3(t,s)P_3(t)x\|&|\leq \nu(s)\||V_3(t,s)P_3(s)x\||_s\leq \nu(t)\||P_3(t)x\||_t\\
&\leq C(t) \nu(t)\|P_3(t)x\|\leq  C(t)\|P_3(t)\| \nu(t)\|x\| .
\end{align*}
If we denote by
$$N(t)=\sup_{s\in[0,t]}C(s)(\|P_1(s)\|+\|P_2(s)\|+\|P_3(s)\|)$$
then we obtain that the inequalities $(ht_2),(kt_2),(\mu t_2),(\nu t_2)$ are satisfied. By Proposition \ref{prop strong invariant trichotomy} it follows that $(U,\mathcal{P})$ is $(h,k,\mu,\nu)-t$.
\end{proof}
As a particular case, we obtain a characterization of (nonuniform) exponential trichotomy given by
\begin{corollary}\label{cor1}
	If $\mathcal{P}=\{P_1,P_2,P_3\}$ is compatible with the evolution operator $U:\Delta\to\mathcal{B}(X)$ then
	the pair $(U,\mathcal{P})$ is exponential trichotomic if and only if there are four real constants $\alpha,\beta,\gamma,\delta>0$ and two families of norms $\mathcal{N}_1=\{\|\cdot\|_t: t\geq0\}$ and $\mathcal{N}_2=\{\||\cdot\||_t: t\geq0\}$ compatible with the norm $\|\cdot\|$ such that
\begin{itemize}
	\item[($et_1$) ] $\|U(t,s)P_1(s)x\|_t\leq e^{-\alpha(t-s)}\|P_1(s)x\|_s$
	\item[($et_2$) ] $\||V_2(t,s)P_2(t)x\||_s\leq e^{-\beta(t-s)}\||P_2(t)x\||_t$
	\item [($e t_3$) ]$\|U(t,s)P_3(s)x\|_t\leq e^{\gamma(t-s)} \|P_3(s)x\|_s$
	\item[($e t_4$) ]$\||V_3(t,s)P_3(t)x\||_s\leq e^{\delta(t-s)}\||P_3(t)x\||_t$,
\end{itemize}
for all $(t,s,x)\in\Delta\times X.$
\end{corollary}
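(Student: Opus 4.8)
The plan is to recognize the corollary as the direct specialization of Theorem \ref{unif=neunif-trichotomie} to the case of exponential growth rates. Exponential trichotomy is by definition the $(h,k,\mu,\nu)$-trichotomy obtained when all four rates are exponential, so the first step is to fix four exponents $\alpha,\beta,\gamma,\delta>0$ and set $h(t)=e^{\alpha t}$, $k(t)=e^{\beta t}$, $\mu(t)=e^{\gamma t}$, $\nu(t)=e^{\delta t}$. I would begin by checking that each of these is a legitimate growth rate in the sense of the earlier definition: each is nondecreasing, takes values in $[1,\infty)$ on $\mathbb{R}_+$, and tends to $\infty$, so that the hypotheses of Theorem \ref{unif=neunif-trichotomie} are available for this choice and the pair $(U,\mathcal{P})$ is exponentially trichotomic precisely when it is $(h,k,\mu,\nu)$-trichotomic for these rates.

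With the rates fixed, the bulk of the argument is a substitution. For the necessity direction I would assume $(U,\mathcal{P})$ is exponentially trichotomic and invoke Theorem \ref{unif=neunif-trichotomie}, which furnishes the two compatible families of norms $\mathcal{N}_1$ and $\mathcal{N}_2$ satisfying $(ht_3)$--$(\nu t_3)$. It then remains to rewrite those four inequalities. Dividing $(ht_3)$ by $h(t)=e^{\alpha t}$ and using $h(s)/h(t)=e^{-\alpha(t-s)}$ gives $(et_1)$; the analogous manipulation of $(kt_3)$ with $k(s)/k(t)=e^{-\beta(t-s)}$ gives $(et_2)$; and $(\mu t_3)$, $(\nu t_3)$ yield $(et_3)$, $(et_4)$ once the quotients $\mu(t)/\mu(s)=e^{\gamma(t-s)}$ and $\nu(t)/\nu(s)=e^{\delta(t-s)}$ are inserted.

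For sufficiency I would run the same computation in reverse. Starting from norms satisfying $(et_1)$--$(et_4)$, multiplying $(et_1)$ by $e^{\alpha t}$ reproduces $(ht_3)$, and similarly multiplying $(et_2)$, $(et_3)$, $(et_4)$ by $e^{\beta t}$, $e^{\gamma s}$, $e^{\delta s}$ recovers $(kt_3)$, $(\mu t_3)$, $(\nu t_3)$ for the chosen exponential rates. Theorem \ref{unif=neunif-trichotomie} then gives that $(U,\mathcal{P})$ is $(h,k,\mu,\nu)$-trichotomic for these rates, which is exactly the assertion that it is exponentially trichotomic.

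Since every step is either a direct appeal to the already-proved theorem or an elementary rearrangement of exponentials, I do not expect any genuine obstacle. The only point demanding a little care is bookkeeping: one must allow four independent exponents $\alpha,\beta,\gamma,\delta$ rather than a single one, and keep track of which rate multiplies the evaluation point $t$ and which multiplies $s$ in each inequality, so that the signs of the exponents in $(et_1)$--$(et_4)$ come out correctly, namely negative for the stable directions governed by $h,k$ and positive for the central directions governed by $\mu,\nu$.
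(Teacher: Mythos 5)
Your proposal is correct and follows exactly the paper's route: the paper proves this corollary by the one-line specialization $h(t)=e^{\alpha t}$, $k(t)=e^{\beta t}$, $\mu(t)=e^{\gamma t}$, $\nu(t)=e^{\delta t}$ in Theorem \ref{unif=neunif-trichotomie}, and your rewriting of $(ht_3)$--$(\nu t_3)$ into $(et_1)$--$(et_4)$ is the same elementary manipulation of exponential quotients, merely carried out explicitly.
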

\begin{proof}
	It results from Theorem \ref{unif=neunif-trichotomie} for $$h(t)=e^{\alpha t},k(t)=e^{\beta t},\nu(t)=e^{\gamma t},\nu(t)=e^{\delta t},$$
with $\alpha,\beta,\gamma,\delta>0.$
\end{proof}
If the growth rates are of polynomial type then we obtain a characterization of (nonuniform) polynomial trichotomy given by
\begin{corollary}\label{cor2}
	Let  $\mathcal{P}=\{P_1,P_2,P_3\}$ is compatible with the evolution operator $U:\Delta\to\mathcal{B}(X)$.
	Then $(U,\mathcal{P})$ is nonuniform polynomial trichotomic if and only if there exist two families of norms $\mathcal{N}_1=\{\|\cdot\|_t: t\geq0\}$ and $\mathcal{N}_2=\{\||\cdot\||_t: t\geq0\}$ compatible with the norm $\|\cdot\|$ and four real constants $\alpha,\beta,\gamma,\delta>0$ such that
	\begin{itemize}
		\item[($pt_1$) ] $(t+1)^\alpha\|U(t,s)P_1(s)x\|_t\leq (s+1)^\alpha\|P_1(s)x\|_s$
		\item[($pt_2$) ] $(t+1)^\beta\||V_2(t,s)P_2(t)x\||_s\leq (s+1)^\beta\||P_2(t)x\||_t$
		\item [($p t_3$) ]$(s+1)^\gamma\|U(t,s)P_3(s)x\|_t\leq (t+1)^\gamma \|P_3(s)x\|_s$
		\item[($p t_4$) ]$(s+1)^\delta\||V_3(t,s)P_3(t)x\||_s\leq (t+1)^\delta
		\||P_3(t)x\||_t$,
	\end{itemize}
	for all $(t,s,x)\in\Delta\times X.$
\end{corollary}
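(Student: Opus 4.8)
The plan is to obtain this corollary as a direct specialization of Theorem \ref{unif=neunif-trichotomie}, in exactly the same manner that Corollary \ref{cor1} was derived for the exponential case. First I would fix the four polynomial rates $h(t)=(t+1)^\alpha$, $k(t)=(t+1)^\beta$, $\mu(t)=(t+1)^\gamma$ and $\nu(t)=(t+1)^\delta$ with $\alpha,\beta,\gamma,\delta>0$, and verify that each of them is a genuine growth rate in the sense of the definition from the preliminaries. This is immediate: for a positive exponent the map $t\mapsto(t+1)^\alpha$ takes the value $1$ at $t=0$ and so sends $\mathbb{R}_+$ into $[1,\infty)$, it is nondecreasing, and it tends to $\infty$ as $t\to\infty$. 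These are precisely the polynomial rates listed in $r_2)$.

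Next I would substitute these four rates into the inequalities $(ht_3)$, $(kt_3)$, $(\mu t_3)$, $(\nu t_3)$ of Theorem \ref{unif=neunif-trichotomie}. Writing $h(t)=(t+1)^\alpha$ and $h(s)=(s+1)^\alpha$ converts $(ht_3)$ into $(pt_1)$; the choice $k(t)=(t+1)^\beta$ converts $(kt_3)$ into $(pt_2)$; the choice $\mu(t)=(t+1)^\gamma$ converts $(\mu t_3)$ into $(pt_3)$, with the factor $(s+1)^\gamma$ appearing on the left and $(t+1)^\gamma$ on the right exactly as in the central/unstable estimate; and $\nu(t)=(t+1)^\delta$ converts $(\nu t_3)$ into $(pt_4)$. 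Each of these is a term-by-term match, so the system $(pt_1)$--$(pt_4)$ is literally the system $(ht_3)$--$(\nu t_3)$ for this particular choice of rates, and the two compatible families of norms $\mathcal{N}_1,\mathcal{N}_2$ are the same objects in both formulations.

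Finally, since Theorem \ref{unif=neunif-trichotomie} states that, for $\mathcal{P}$ compatible with $U$, the pair $(U,\mathcal{P})$ is $(h,k,\mu,\nu)$-trichotomic if and only if there exist two compatible families of norms satisfying $(ht_3)$--$(\nu t_3)$, and since $(h,k,\mu,\nu)$-trichotomy with these polynomial rates is by definition nonuniform polynomial trichotomy ($pt$), the claimed equivalence follows at once. I do not expect any genuine obstacle here: the argument is purely a substitution into the main theorem, and the only point deserving a line of care is the verification that polynomial rates satisfy the growth-rate axioms, which holds trivially for positive exponents.
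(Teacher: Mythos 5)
Your proposal is correct and follows exactly the paper's own argument: the corollary is obtained by substituting the polynomial rates $h(t)=(t+1)^\alpha$, $k(t)=(t+1)^\beta$, $\mu(t)=(t+1)^\gamma$, $\nu(t)=(t+1)^\delta$ into Theorem \ref{unif=neunif-trichotomie}. Your added verification that these are genuine growth rates is a small but welcome extra care the paper leaves implicit.
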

\begin{proof}
	It results from Theorem \ref{unif=neunif-trichotomie} for $$h(t)=(t+1)^\alpha,k(t)=(t+1)^\beta,\mu(t)=(t+1)^\gamma,\nu(t)=(t+1)^\delta,$$
	with $\alpha,\beta,\gamma,\delta>0.$
\end{proof}
\begin{definition}
	A family of norms $\mathcal{N}=\{\|\cdot\|_t,t\geq 0\}$ is \textit{uniformly compatible} with the norm $\|\cdot\|$ if there exits a constant $c>0$
  such that
\begin{equation}\label{norma compatibila uniform}
\|x\|\leq \|x\|_t\leq c\|x\|, \text{ for all } (t,x)\in\mathbb{R}_+\times X.
\end{equation}
\end{definition}
\begin{remark}
	From the proofs of Propositions \ref{ex-norma-trichotomie2}, \ref{ex-norma-trichotomie1} it results that if the pair $(U,\mathcal{P})$ is uniformly $(h,k,\mu,\nu)-$ trichotomic then the families of norms $\mathcal{N}_1=\{\|\cdot\|_t:t\geq 0\}$ and $\mathcal{N}_2=\{\||\cdot\||_t:t\geq 0\}$ (given by (\ref{norma-tricho-sus}) and (\ref{norma-tricho-jos})) are uniformly compatible with the norm $\|\cdot\|.$
\end{remark}
A characterization of the uniform$-(h,k,\mu,\nu)-$trichotomy is given by
\begin{theorem}
	Let $\mathcal{P}=\{P_1,P_2,P_3\}$ be compatible with the evolution operator $U:\Delta\to\mathcal{B}(X)$. Then
	the pair $(U,\mathcal{P})$ is uniformly$-(h,k,\mu,\nu)-$trichotomic if and only if there exist two families of norms $\mathcal{N}_1=\{\|\cdot\|_t: t\geq0\}$ and $\mathcal{N}_2=\{\||\cdot\||_t: t\geq0\}$ uniformly compatible with the norm $\|\cdot\|$ such that the inequalities $(ht_3),(kt_3),(\mu t_3)$ and $(\nu t_3)$ are satisfied.
\end{theorem}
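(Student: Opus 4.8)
The plan is to mirror the proof of Theorem \ref{unif=neunif-trichotomie}, but to carry the single uniform constant through the whole argument without ever passing to a supremum of projector norms. For necessity I would assume $(U,\mathcal{P})$ is uniformly $(h,k,\mu,\nu)$-trichotomic and invoke the Remark preceding the statement, by which the families $\mathcal{N}_1$ and $\mathcal{N}_2$ defined in (\ref{norma-tricho-sus}) and (\ref{norma-tricho-jos}) are uniformly compatible with $\|\cdot\|$. I would then observe that the four inequalities $(ht_3),(kt_3),(\mu t_3),(\nu t_3)$ are produced by exactly the same computations as in the necessity part of Theorem \ref{unif=neunif-trichotomie}: those only exploit the supremum structure of the norms together with the evolution identity $(e_2)$ and the properties $v_1)$--$v_6)$ of the isomorphisms $V_j$, so they are insensitive to whether the trichotomy is uniform or not, and no recomputation is required.

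For sufficiency I would start from uniformly compatible norms satisfying $\|y\|\leq\|y\|_t\leq c\|y\|$ for all $(t,y)$ together with $(ht_3),(kt_3),(\mu t_3),(\nu t_3)$, and I would aim for the uniform characterization of Proposition \ref{prop strong invariant trichotomy uniform} (whose right-hand sides retain the factors $\|P_j(\cdot)x\|$) rather than the nonuniform Proposition \ref{prop strong invariant trichotomy}. Applying the lower bound $\|\cdot\|\leq\|\cdot\|_t$ on the left and the upper bound $\|\cdot\|_t\leq c\|\cdot\|$ on the right, each hypothesis collapses to its uniform counterpart with the single constant $N=c$; for instance the chain $h(t)\|U(t,s)P_1(s)x\|\leq h(t)\|U(t,s)P_1(s)x\|_t\leq h(s)\|P_1(s)x\|_s\leq c\,h(s)\|P_1(s)x\|$ yields $(uht_1)$, and $(kt_3),(\mu t_3),(\nu t_3)$ produce $(ukt_1),(u\mu t_1),(u\nu t_1)$ in exactly the same way. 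A final appeal to Proposition \ref{prop strong invariant trichotomy uniform} then gives that $(U,\mathcal{P})$ is uniformly $(h,k,\mu,\nu)$-trichotomic.

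The step I expect to be the main obstacle, and the reason the sufficiency cannot be copied verbatim from Theorem \ref{unif=neunif-trichotomie}, is the handling of the projectors. In the nonuniform proof the term $\|P_j(s)\|$ is absorbed into $N(t)=\sup_{s\in[0,t]}C(s)(\|P_1(s)\|+\|P_2(s)\|+\|P_3(s)\|)$, which is admissible there because $N$ is only required to be a nondecreasing function; here that supremum need not be finite, since the projectors are not assumed bounded, so it cannot deliver a genuine constant $N$. Routing the estimates through Proposition \ref{prop strong invariant trichotomy uniform}, where the projectors remain on the right-hand side, is precisely what sidesteps this difficulty and lets the uniform compatibility constant $c$ transfer directly to the uniform trichotomy constant.
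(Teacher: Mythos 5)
Your proposal is correct and is exactly the route the paper intends: its proof of this theorem is the single line ``It results from the proof of Theorem \ref{unif=neunif-trichotomie} (via Proposition \ref{prop strong invariant trichotomy uniform})'', and your write-up simply makes explicit what that reference means --- the necessity computations carry over unchanged with uniform compatibility supplied by the preceding remark, while the sufficiency must target Proposition \ref{prop strong invariant trichotomy uniform} (keeping $\|P_j(\cdot)x\|$ on the right) precisely to avoid the unbounded supremum of projector norms. Your identification of that last point as the only place where the argument genuinely differs from Theorem \ref{unif=neunif-trichotomie} is accurate.
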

\begin{proof}
It results from the proof of Theorem \ref{unif=neunif-trichotomie} (via Proposition \ref{prop strong invariant trichotomy uniform}).
\end{proof}
\begin{remark}
	Similarly as in Corollaries \ref{cor1}, \ref{cor2} one can obtain characterizations for uniform exponential trichotomy respectively uniform polynomial trichotomy.
\end{remark}
Another characterization of the $(h,k,\mu,\nu)-$trichotomy is given by
\begin{theorem}\label{unif=neunif-trichotomie-fara-proiectori}If $\mathcal{P}=\{P_1,P_2,P_3\}$ is compatible with the evolution operator $U:\Delta\to\mathcal{B}(X)$ then
	the pair $(U,\mathcal{P})$ is $(h,k,\mu,\nu)$-trichotomic if and only if there exist two families of norms $\mathcal{N}_1=\{\|\cdot\|_t,t\geq 0\},\mathcal{N}_2=\{\||\cdot\||_t: t\geq0\}$ compatible with the family of projectors $\mathcal{P}=\{P_1,P_2,P_3\}$ such that
	\begin{itemize}
		\item[($ht_4$) ] $h(t)\|U(t,s)P_1(s)x\|_t\leq  h(s) \|x\|_s$
		\item[($kt_4$) ] $k(t)\||V_2(t,s)P_2(t)x\||_s\leq  k(s) \||x\||_t$
		\item [$(\mu t_4)$ ]$\mu(s)\|U(t,s)P_3(s)x\|_t\leq  \mu(t) \|x\|_s$
		\item[$(\nu t_4$) ]$\nu(s)\||V_3(t,s)P_3(t)x\||_s\leq  \nu(t)  \||x|\||_t$
	\end{itemize}
	for all $(t,s,x)\in\Delta\times X.$
\end{theorem}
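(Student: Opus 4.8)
I read \emph{compatible with the family of projectors} in the sense of Definition \ref{def-norma-compatibila} (compatibility with the norm $\|\cdot\|$, the only such notion at hand), so that the statement is a refinement of Theorem \ref{unif=neunif-trichotomie} in which the right-hand sides of $(ht_4)$--$(\nu t_4)$ carry the full quantities $\|x\|_s$, $\||x\||_t$ instead of the projected ones $\|P_1(s)x\|_s$, $\||P_2(t)x\||_t$, etc.\ used in $(ht_3)$--$(\nu t_3)$. Since the projectors satisfy the orthogonality relations $o_2)$, one always has $\|P_j(s)x\|_s\le\|x\|_s$ and the analogous bounds for $\||\cdot\||$, so the conditions $(ht_3)$--$(\nu t_3)$ are formally stronger than $(ht_4)$--$(\nu t_4)$. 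The plan is to run both implications off this one monotonicity remark together with the results already proved.

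For necessity, assuming $(U,\mathcal P)$ is $(h,k,\mu,\nu)$-trichotomic, Propositions \ref{ex-norma-trichotomie2} and \ref{ex-norma-trichotomie1} furnish the explicit compatible families (\ref{norma-tricho-sus}), (\ref{norma-tricho-jos}), and the necessity half of Theorem \ref{unif=neunif-trichotomie} shows they satisfy $(ht_3)$--$(\nu t_3)$. I would keep exactly these norms and use $o_2)$: evaluating (\ref{norma-tricho-sus}) at $P_1(s)x$ annihilates the $P_2$- and $P_3$-summands, leaving only the first supremum, whence $\|P_1(s)x\|_s\le\|x\|_s$; the same collapse gives $\|P_3(s)x\|_s\le\|x\|_s$ from (\ref{norma-tricho-sus}) and $\||P_2(t)x\||_t\le\||x\||_t$, $\||P_3(t)x\||_t\le\||x\||_t$ from (\ref{norma-tricho-jos}). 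Inserting these four inequalities into the right-hand sides of $(ht_3)$--$(\nu t_3)$ yields $(ht_4)$--$(\nu t_4)$ at once.

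For sufficiency I would take arbitrary families $\mathcal N_1,\mathcal N_2$ compatible with $\|\cdot\|$ and satisfying $(ht_4)$--$(\nu t_4)$, and pass straight to the conditions $(ht_2)$--$(\nu t_2)$ of Proposition \ref{prop strong invariant trichotomy}. For example, the lower bound $\|\cdot\|\le\|\cdot\|_t$ and the upper bound of Definition \ref{def-norma-compatibila} turn $(ht_4)$ into
\begin{equation*}
h(t)\|U(t,s)P_1(s)x\|\le h(t)\|U(t,s)P_1(s)x\|_t\le h(s)\|x\|_s\le C(s)h(s)\|x\|,
\end{equation*}
which is $(ht_2)$; the remaining three pass over identically, and one takes $N_1$ to be the larger of the two compatibility functions of $\mathcal N_1$ and $\mathcal N_2$. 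Notably no operator bound $\|P_j(s)x\|\le\|P_j(s)\|\,\|x\|$ is required here, so this half is slightly cleaner than in Theorem \ref{unif=neunif-trichotomie}. Proposition \ref{prop strong invariant trichotomy} then gives that $(U,\mathcal P)$ is $(h,k,\mu,\nu)$-trichotomic.

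The only genuinely substantive point is the orthogonality collapse in the necessity direction: one must verify that the sup-type norms (\ref{norma-tricho-sus}), (\ref{norma-tricho-jos}), restricted to the range of a single projector, reduce to exactly one of their three defining suprema. After that everything is the remark $\|P_j(s)x\|_s\le\|x\|_s$ and a routine substitution of the compatibility bounds, so I expect no obstacle beyond correctly pairing the four cases with the two norm families.
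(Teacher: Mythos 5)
Your proposal is correct and follows essentially the same route as the paper: necessity via the explicit norms of Propositions \ref{ex-norma-trichotomie2} and \ref{ex-norma-trichotomie1} together with the orthogonality collapse giving $\|P_i(t)x\|_t\le\|x\|_t$ and $\||P_i(t)x\||_t\le\||x\||_t$, and sufficiency by substituting/estimating back down to the conditions of Proposition \ref{prop strong invariant trichotomy}. The only (immaterial) difference is that the paper's sufficiency first recovers $(ht_3)$--$(\nu t_3)$ by replacing $x$ with the projected vectors and then cites Theorem \ref{unif=neunif-trichotomie}, whereas you pass directly to $(ht_2)$--$(\nu t_2)$.
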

\begin{proof}
	\textit{Necessity.} It results from Theorem \ref{unif=neunif-trichotomie} and inequalities
	\begin{eqnarray*}
	\|P_i(t)x\|_t\leq \|x\|_t&\text{and}&
	\||P_i(t)x\||_t\leq \||x\||_t,
	\end{eqnarray*}for all $(t,x)\in\mathbb{R}_+\times X$ and $i=\{1,2,3\}$.
	
	\textit{Sufficiency.} It results replacing $x$ by $P_1(s)x$ in $(ht_4)$, $x$ by  $P_2(t)x$ in $(kt_4)$, $x$ by $P_3(s)x$ in $(\mu t_4)$ and $x$ by $P_3(t)x$ in $(\nu t_4)$.
\end{proof}
The variant of the previous theorem for uniform $(h,k,\mu,\nu)-$trichotomy is given by
\begin{theorem}
	If $\mathcal{P}=\{P_1,P_2,P_3\}$ is compatible with the evolution operator $U:\Delta\to\mathcal{B}(X)$ then
	the pair $(U,\mathcal{P})$ is uniformly $-(h,k,\mu,\nu)-$ trichotomic if and only if there exist two families of norms $\mathcal{N}_1=\{\|\cdot\|_t:t\geq 0\},\mathcal{N}_2=\{\||\cdot\||_t: t\geq0\}$ uniformly compatible with the family of projectors $\mathcal{P}=\{P_1,P_2,P_3\}$ such that the inequalities $(ht_4),(kt_4),(\mu t_4 )$ and $(\nu t_4)$ are satisfied.
\end{theorem}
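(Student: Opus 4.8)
The plan is to imitate, mutatis mutandis, the proof of Theorem \ref{unif=neunif-trichotomie-fara-proiectori}, replacing every appeal to the nonuniform norm machinery by its uniform counterpart. The two ingredients I would invoke are the uniform analogue of Theorem \ref{unif=neunif-trichotomie} (the theorem proved via Proposition \ref{prop strong invariant trichotomy uniform}), which characterizes uniform $-(h,k,\mu,\nu)-$trichotomy through norm families \emph{uniformly} compatible with $\|\cdot\|$ satisfying $(ht_3)$--$(\nu t_3)$, together with the elementary projector inequalities $\|P_i(t)x\|_t\le\|x\|_t$ and $\||P_i(t)x\||_t\le\||x\||_t$, valid for any orthogonal family $\mathcal{P}$.

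For \emph{necessity}, I would assume $(U,\mathcal{P})$ is uniformly $-(h,k,\mu,\nu)-$trichotomic and first produce, through the uniform analogue of Theorem \ref{unif=neunif-trichotomie}, the concrete norm families $\mathcal{N}_1,\mathcal{N}_2$ of (\ref{norma-tricho-sus}) and (\ref{norma-tricho-jos}). By the preceding Remark these are uniformly compatible with $\|\cdot\|$, and since $\mathcal{P}$ is orthogonal they are uniformly compatible with the family of projectors and satisfy $(ht_3)$--$(\nu t_3)$. The inequalities $(ht_4)$--$(\nu t_4)$ then follow by bounding the right-hand sides of $(ht_3)$--$(\nu t_3)$ via $\|P_1(s)x\|_s\le\|x\|_s$, $\||P_2(t)x\||_t\le\||x\||_t$, $\|P_3(s)x\|_s\le\|x\|_s$ and $\||P_3(t)x\||_t\le\||x\||_t$, exactly as in Theorem \ref{unif=neunif-trichotomie-fara-proiectori}.

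For \emph{sufficiency}, starting from norm families uniformly compatible with $\mathcal{P}$ for which $(ht_4)$--$(\nu t_4)$ hold, I would substitute $x\mapsto P_1(s)x$ in $(ht_4)$, $x\mapsto P_2(t)x$ in $(kt_4)$, $x\mapsto P_3(s)x$ in $(\mu t_4)$ and $x\mapsto P_3(t)x$ in $(\nu t_4)$. Using idempotence and orthogonality of the projectors, the left-hand norms are unaffected while the right-hand arguments collapse to $P_i$-images, which recovers $(ht_3)$--$(\nu t_3)$. Applying the uniform analogue of Theorem \ref{unif=neunif-trichotomie} then yields that $(U,\mathcal{P})$ is uniformly $-(h,k,\mu,\nu)-$trichotomic.

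All computations are routine; the only step that requires genuine attention, in the necessity half, is checking that the explicit norms (\ref{norma-tricho-sus}) and (\ref{norma-tricho-jos}) satisfy the projector bounds with a constant \emph{independent} of $t$. This reduces to the observation that applying $P_i(t)$ annihilates the two off-diagonal supremum terms and leaves the $i$-th term untouched, so that $\|P_i(t)x\|_t$ is literally one summand of $\|x\|_t$; the uniform constant is then inherited from the uniform compatibility with $\|\cdot\|$ guaranteed by the Remark. I expect this bookkeeping to be the main (though minor) obstacle, the remainder being a verbatim transcription of the argument for Theorem \ref{unif=neunif-trichotomie-fara-proiectori} with \emph{compatible} replaced throughout by \emph{uniformly compatible}.
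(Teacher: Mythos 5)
Your proposal is correct and follows essentially the same route as the paper, which disposes of this theorem by declaring it ``similar'' to the proofs of Theorems \ref{unif=neunif-trichotomie} and \ref{unif=neunif-trichotomie-fara-proiectori}: necessity via the uniform analogue of Theorem \ref{unif=neunif-trichotomie} plus the inequalities $\|P_i(t)x\|_t\leq\|x\|_t$ and $\||P_i(t)x\||_t\leq\||x\||_t$, sufficiency by substituting $P_1(s)x$, $P_2(t)x$, $P_3(s)x$, $P_3(t)x$ to recover $(ht_3)$--$(\nu t_3)$. Your closing observation that $\|P_i(t)x\|_t$ is literally one summand of $\|x\|_t$ (the off-diagonal terms being annihilated by orthogonality) is exactly the justification the paper leaves implicit.
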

\begin{proof}
	It is similar with the proof of Theorem \ref{unif=neunif-trichotomie}.
\end{proof}
\begin{remark}
	If the growth rates are exponential respectively polynomial then we obtain characterizations for exponential trichotomy, uniform exponential trichotomy and uniform polynomial trichotomy.
\end{remark}

\end{document}